\documentclass{article}
\usepackage[utf8]{inputenc}

\usepackage{graphicx,graphics}

\usepackage{rotating}
\usepackage[twoside]{geometry}
\usepackage{epsfig}
\usepackage{cmap}

\usepackage{graphicx,graphics}

\usepackage{hyperref}
\hypersetup{
pdftitle={Long-rangediscreteDirac},            
colorlinks=true,      
linkcolor=black,         
citecolor=blue}

\usepackage{amssymb,amsfonts,amstext,amsthm}
\usepackage{mathrsfs}
\usepackage[intlimits]{amsmath}
\newtheorem{proposition}{\bf Proposition}[section]

\newtheorem{lemma}{\bf Lemma}[section]
\newtheorem{theorem}{\bf Theorem}[section]

\newtheorem{remark}{Remark}

\newcounter{SE}

\title{Localization and eigenvalue asymptotics for long-range discrete Dirac operators with Stark potential}
\author{Moacir Aloisio, C\'esar de Oliveira and Mariane Pigossi}
\date{}

\begin{document}

\maketitle
\begin{abstract}
We study long-range discrete Dirac operators with Stark potential, extending the theory of Stark localization from scalar lattice models to systems with internal spinorial structure. We initially investigate the local setting, where two distinct localization mechanisms arise. The standard local Dirac-Stark operator yields two Stark-type spectral ladders and exponentially localized spinorial eigenfunctions. Conversely, a related pure-shift local model exhibits an invariant block structure that leads to explicitly computable eigenvalues and exact localization, with eigenfunctions compactly supported on only two spinorial sites. This extreme confinement surpasses the factorial decay characteristic of the classical scalar Stark model. For the general long-range Dirac model, we observe that the eigenvalues remain asymptotically close to the Stark ladder and prove that the corresponding eigenfunctions satisfy power-law localization estimates. Consequently, we establish power-law localization in the sense of finite moments of the position operator for the spinorial evolution. Our results demonstrate that deterministic Stark localization is robust and persists in genuinely matrix-valued lattice systems.
\end{abstract}
\sloppy

\renewcommand{\thetable}{\Alph{table}}

%%%%%%%%%%%%%%%%%%%%%%%%%%%%%%%%%%%%%%%%%%%%%%%%%%%%%%%%%%%%%%%%%%%%%%%%%%%%%%%%%%%%%%%%%%%%%%%%%%%%%%%%%%%%%%%%%%--Introduction--%%%%%%%%%%%%%%%%%%%%%%%%%%%%%%%%%%%%%%%%%%%%%%%%%%%%%%%%%%%%%%%%%%%%%%%%%%%%%%%%%%%%%%%%%%%%%%%%%%%%%%%%%%%%%%%%%%%%%%%%%%%%%%%%%%%%%%%%%%%%%%%%%%%%%%%%%%%%%%%%%%%%%%%%%%%%%%%%%%%%%%%

%SSSSS
\section{Introduction}

%sssss
\subsection{Contextualization}

Dynamical localization is one of the central manifestations of the absence of quantum transport in lattice systems. In contrast with purely spectral localization, which concerns the nature of the spectrum and the spatial decay of eigenfunctions, dynamical localization describes the confinement of initially localized wave packets uniformly in time. Since Anderson's seminal work on the absence of diffusion in random lattices \cite{And58}, localization has become a major topic in mathematical physics, leading to a variety of powerful techniques, including the fractional moment method \cite{Aizenman,Aizenman2}, multiscale analysis \cite{Spencer}, and approaches based on SULE and eigenfunction correlator estimates \cite{DJLS,Tcheremchantsev}.

A particularly interesting deterministic localization mechanism is provided by the Stark effect. In lattice systems subjected to a constant electric field, the linear Stark potential suppresses transport even in the absence of disorder. Earlier works on discrete Schr\"odinger operators established dynamical localization and related it to the Stark ladder structure of the spectrum and the decay of eigenfunctions \cite{Pigossi1,Pigossi,Nazareno}.

More recently, Stark localization has been extended to long-range hopping models with polynomially decaying interactions. Such operators arise naturally in several physical settings, including dipolar Frenkel excitons, nuclear spin systems, and the quantum Kepler model \cite{AltshulerL1997,Shi2}. Localization phenomena for long-range operators have been investigated in random, quasi-periodic, and Stark regimes \cite{Aizenman2,Aloisio,Aloisio2,Disertori,Sun2,Kerner,Kraisler,Shi2,Shi4,Shi5,Shi6,Sun}. In the Stark setting, power-law localization was first obtained through perturbative KAM-type arguments \cite{Sun}. Subsequently, a non-perturbative approach based on eigenvalue asymptotics, the Min-Max Principle, and power-law ULE estimates established power-law localization for arbitrary bounded perturbations \cite{Aloisio}. More broadly, Stark localization has also been investigated in related settings, including Jacobi operators, nonlinear Stark models, time quasi-periodic Hamiltonians, and interacting-particle systems \cite{Roeck,Hu2,Hu,Sun3}.

At the same time, discrete Dirac operators have attracted considerable attention as lattice models incorporating internal spinorial degrees of freedom. Besides their relevance to relativistic lattice systems and graphene-type models, they provide a natural framework for investigating localization phenomena in matrix-valued settings. Recent works have studied localization for discrete Dirac operators in random and quasi-random environments, including Bernoulli models, decaying random potentials, and quasi-one-dimensional systems \cite{Barbaroux,Boumaza,Bourget,BourgetVargasMancipe,Prado1,Prado2}. These studies indicate that localization mechanisms for Dirac operators may differ substantially from those arising in scalar Schr\"odinger models.

The purpose of the present work is to extend Stark localization theory to a class of long-range discrete Dirac operators. More precisely, we investigate operators of the form
\[
\mathcal L=D_a+X\otimes I_{\mathbb C^2}+B\otimes I_{\mathbb C^2},
\]
where \(D_a\) is a long-range discrete Dirac operator, \(X\otimes I_{\mathbb C^2}\) is the Stark potential acting equally on both spin components, and \(B\) is a bounded perturbation.  This choice corresponds to the massless case \(m=0\) of the one-dimensional discrete Dirac operator, in units where the characteristic velocity is normalized to \(c=1\) \cite{Prado1,Prado2}. The precise definition of \(D_a\) is given in Subsection~\ref{defoperators}.

The  question addressed here is whether the spectral-asymptotic mechanism underlying Stark localization survives in this matrix-valued setting. Our main result (see Theorem \ref{mainthm} ahead) shows that this is indeed the case, that is, the eigenvalues remain asymptotically organized along Stark ladders, the corresponding eigenfunctions exhibit power-law localization, and the associated spinorial dynamics satisfies power-law localization estimates.

To illustrate the distinctive features of the Dirac framework, we first analyze two local nearest-neighbor models. The standard local Dirac-Stark operator exhibits two Stark-type spectral ladders and exponentially localized eigenfunctions. More surprisingly, a related pure-shift Dirac-Stark model possesses a special off-diagonal structure that decomposes the infinite-dimensional problem into independent two-dimensional blocks. As a consequence, all eigenfunctions are compactly supported on exactly two spinorial sites. This exact localization is  stronger than the factorial decay characteristic of the classical scalar Stark model~\cite{Pigossi} and reveals a phenomenon that has no direct counterpart in the scalar setting.

Note that compactly supported eigenvectors occur for certain two-dimensional lattice systems, such as the quantum graph model for graphene and layers of graphene with Dirichlet boundary conditions~\cite{deORocha2020,KuchPost2007}, but this phenomenon is notably rare in one-dimensional lattice systems.

We summarize the main novelties of the paper as follows.

\begin{enumerate}

\item \textbf{A new model:  Dirac-Stark lattice model.}
We introduce a class of long-range discrete Dirac operators with Stark potential acting in the spinorial Hilbert space
\[
\ell^2(\mathbb Z)\otimes\mathbb C^2.
\]

\item \textbf{Exact localization in a local Dirac model.}
We identify a pure-shift local Dirac-Stark operator whose off-diagonal structure induces a decomposition into invariant two-dimensional blocks. This yields explicit eigenvalues and compactly supported eigenfunctions (Proposition~\ref{prop2}), providing an extreme form of localization that contrasts sharply with the factorial decay found in scalar Stark models.

\item \textbf{Robustness of Stark localization in the Dirac setting.}
We prove that the spectral-asymptotic mechanism responsible for localization in long-range Stark operators remains valid in the presence of spinorial degrees of freedom and spectral multiplicity. Our approach is entirely non-perturbative and does not rely on KAM techniques.

\end{enumerate}

The paper is organized as follows. In Subsection~1.2 we introduce the relevant operators, discuss the local Dirac-Stark models, and state the main theorem. Section~2 is devoted to the proofs of the local results. In Section~\ref{secmainthm} we establish the eigenvalue asymptotics, prove power-law localization estimates for the eigenfunctions, and derive the corresponding power-law localization bounds.

Throughout the paper, $\ell^2(\mathbb Z)$ denotes the Hilbert space of square-summable sequences on $\mathbb Z$, with canonical basis $\{\delta_n\}_{n\in\mathbb Z}$, and $\{e_1,e_2\}$ denotes the canonical basis of $\mathbb C^2$. We write
\[
\mathcal H := \ell^2(\mathbb Z)\otimes \mathbb C^2.
\]

For each $n\in\mathbb Z$, $P_n$ denotes the orthogonal projection of $\mathcal H$ onto
\[
\operatorname{span}\{\delta_n\otimes e_1, \delta_n\otimes e_2\}.
\]
Thus, if
$
\Psi = \psi_1\otimes e_1 + \psi_2\otimes e_2 \in \mathcal H,
$
then
\[
P_n\Psi = \psi_1(n)\delta_n\otimes e_1 + \psi_2(n)\delta_n\otimes e_2
\]
and
\[
\|P_n\Psi\|^2 = |\psi_1(n)|^2 + |\psi_2(n)|^2.
\]

Finally, throughout the paper, for every $n\in\mathbb Z$, we write
\[
\langle n\rangle := 1+|n|.
\]

%sssssss
\subsection{Long-range discrete Dirac operators with Stark potential} \label{defoperators}

Consider the Hilbert space \(\mathcal{H} = \ell^2(\mathbb Z)\otimes \mathbb C^2\). We denote by \(\{\delta_n\}_{n\in\mathbb Z}\) the canonical basis of \(\ell^2(\mathbb Z)\), and by \(\{e_1,e_2\}\) the canonical basis of \(\mathbb C^2\). Every vector \(\Psi\in \mathcal{H}\) can be written in the form
\[
\Psi = \psi_1\otimes e_1 + \psi_2\otimes e_2, \, \,  \psi_1,\psi_2\in \ell^2(\mathbb Z).
\]

Let \(a\in \ell^1_r(\mathbb Z)\),  \(r\geq 0\), that is, $\|a\|_r := \sum_{m\in\mathbb Z}|a(m)|\,|m|^r <\infty.$ Suppose that $a(0)=0$. We define the long-range hopping operator \(T_a\) on \(\ell^2(\mathbb Z)\) by
\[
(T_a u)(m) = \sum_{k\in\mathbb Z} a(k-m)u(k), \, \,  m\in\mathbb Z.
\]
Its adjoint is given by
\[
(T_a^*u)(m) = \sum_{k\in\mathbb Z} a^*(m-k)u(k), \, \,  m\in\mathbb Z.
\]
where \(a^*\) denotes the conjugate of \(a\).

The long-range discrete Dirac operator
\[
D_a:\mathcal{H} \longrightarrow \mathcal{H}
\]
is here introduced as
\[
D_a\Psi = (T_a^*-I)\psi_2\otimes e_1 + (T_a-I)\psi_1\otimes e_2.
\]
This operator may be viewed as a massless discrete Dirac operator whose kinetic part is obtained by replacing the nearest-neighbor difference operator with a long-range convolution operator.

Let \(X\) be the position operator on \(\ell^2(\mathbb Z)\), i.e.,
\[
(Xu)(n) = n u(n), \, \,  n\in\mathbb Z.
\]
The Stark potential is defined by
\[
V := X\otimes I_{\mathbb C^2}.
\]
Therefore,
\[
V\Psi = (X\psi_1)\otimes e_1 + (X\psi_2)\otimes e_2
\]
on the domain 
\[
\operatorname{dom} \, V = \Big\{ \Psi\in\mathcal H: \sum_{n\in\mathbb Z} n^2 |\psi_j(n)|^2 < \infty, \; j \in \{1,2\} \Big\}.
\]

In this work, we study the spectral and localization properties of the self-adjoint operator
\begin{equation}\label{eqmain0101}
\mathcal{L} = D_a + X\otimes I_{\mathbb{C}^2} + B \otimes I_{\mathbb{C}^2},    
\end{equation}
where $B \in \ell^\infty(\mathbb{Z},\mathbb{R})$ and $\operatorname{dom}\, \mathcal{L} = \operatorname{dom}\, V.$
In matrix representation, with respect to the standard spinorial basis $\{e_1, e_2\}$, the operator $\mathcal L$ acts on a vector $\Psi \equiv \begin{pmatrix} \psi_1 \\ \psi_2 \end{pmatrix}$ as 
\[
\mathcal{L} \begin{pmatrix} \psi_1 \\ \psi_2 \end{pmatrix} = \begin{pmatrix} X+B & T_a^*-I \\ T_a-I & X+B \end{pmatrix} \begin{pmatrix} \psi_1 \\ \psi_2 \end{pmatrix}.
\]

%sssssss
\subsection*{The local case}

We consider  the local case, which is obtained by taking
\[
a_{\mathrm{loc}}(m) = \delta_{m,1}, \, \,  m\in\mathbb Z.
\]
Under this assumption, the long-range discrete Dirac operator \(D_a\) reduces to the (classical) discrete Dirac operator \(D\). Namely, for every $\Psi \in \mathcal{H}$, we set
\[
D\Psi = (T_{a_{\mathrm{loc}}}^*-I)\psi_2\otimes e_1 + (T_{a_{\mathrm{loc}}}-I)\psi_1\otimes e_2.
\]

The discrete Dirac operator with Stark potential is $\mathcal{A}:=D+V$, with ${\rm dom} \, \mathcal{A}= {\rm dom} \, V$, which yeilds
\[
\mathcal{A}\Psi = \left(X\psi_1+(T_{a_{\mathrm{loc}}}^*-I)\psi_2\right)\otimes e_1 + \left((T_{a_{\mathrm{loc}}}-I)\psi_1+X\psi_2\right)\otimes e_2.
\]
In matrix representation, the operator $\mathcal A$ acts on a vector $\Psi \equiv \begin{pmatrix} \psi_1 \\ \psi_2 \end{pmatrix}$ as 
\[
\mathcal{A} \begin{pmatrix} \psi_1 \\ \psi_2 \end{pmatrix} = \begin{pmatrix} X & T_{a_{\mathrm{loc}}}^*-I \\ T_{a_{\mathrm{loc}}}-I & X \end{pmatrix} \begin{pmatrix} \psi_1 \\ \psi_2 \end{pmatrix}.
\]

\begin{proposition}\label{prop1}
The spectrum of \(\mathcal A\) is pure point and consists of the eigenvalues
\[
\lambda_{k,j} = k+j\frac{\beta}{2\pi}, \, \,  k\in\mathbb Z, \; j\in\{-1,1\},
\]
for some \(\beta\in[0,\pi]\). The corresponding eigenfunctions 
\[
\{\Psi_{m,s}\}_{m\in\mathbb Z,\; s\in\{-1,1\}}
\]
form an orthonormal basis of $\mathcal{H}$ and satisfy, for some $\alpha>0$ and  $\gamma>0$,
\[
\|P_n \Psi_{m,s}\|\leq \gamma \,  e^{-\alpha |m-n|}, \, \,  m,n\in\mathbb Z, \;  s\in\{-1,1\}.
\]
\end{proposition}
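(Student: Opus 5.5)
Our plan is to pass to the Fourier representation, where $X$ becomes a derivation, and solve the resulting first-order $2\times 2$ system explicitly. Let $\mathcal F:\ell^2(\mathbb Z)\to L^2(\mathbb T)$, $\mathbb T=\mathbb R/2\pi\mathbb Z$, be given by $(\mathcal Fu)(\theta)=\sum_n u(n)e^{in\theta}$. Then $X$ becomes $-i\,d/d\theta$, and $T_{a_{\mathrm{loc}}}$ (with $(T_{a_{\mathrm{loc}}}u)(m)=u(m+1)$) becomes multiplication by $e^{-i\theta}$; the sign convention is irrelevant. Hence $\mathcal A$ is unitarily equivalent to
\[
\widehat{\mathcal A}=-i\frac{d}{d\theta}\,I_{\mathbb C^2}+M(\theta),\qquad M(\theta)=\begin{pmatrix}0& e^{i\theta}-1\\ e^{-i\theta}-1&0\end{pmatrix},
\]
acting on $H^1(\mathbb T;\mathbb C^2)$. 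Here $M$ is a Hermitian, real-analytic, $2\pi$-periodic matrix function.

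\textbf{Spectrum via the propagator.} The eigenvalue equation $\widehat{\mathcal A}\Phi=\lambda\Phi$ reads
\[
\Phi'=i(\lambda-M(\theta))\Phi .
\]
Let $U(\theta)$ solve $U'=-iM(\theta)U$ with $U(0)=I$; then $U(\theta)\in SU(2)$, since $M$ is traceless Hermitian. Solutions have the form $\Phi(\theta)=e^{i\lambda\theta}U(\theta)v$, and periodicity requires
\[
e^{2\pi i\lambda}U(2\pi)v=v .
\]
The monodromy $U(2\pi)\in SU(2)$ has eigenvalues $e^{\pm i\beta}$ for some $\beta\in[0,\pi]$, with eigenvectors $v_\pm$. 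So $\lambda$ is an eigenvalue if and only if $2\pi\lambda\mp\beta\in2\pi\mathbb Z$, that is, $\lambda=k\pm\beta/(2\pi)$.

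\textbf{Completeness.} The cleanest route is the gauge transformation $W(\theta)=U(\theta)e^{-i\theta Q}$, where $Q$ is Hermitian with $e^{2\pi iQ}=U(2\pi)$ and eigenvalues $\pm\beta/(2\pi)$. This $W$ is periodic, analytic and unitary-valued. Conjugation by $W$ maps $\widehat{\mathcal A}$ to $-i\,d/d\theta+Q$, whose eigenfunctions $e^{ik\theta}w_\pm$ form an orthonormal basis of $L^2(\mathbb T;\mathbb C^2)$. Since multiplication by $W$ is unitary, the vectors
\[
\Phi_{k,\pm}=W(\theta)e^{ik\theta}w_\pm
\]
form an orthonormal eigenbasis of $\widehat{\mathcal A}$, and the spectrum is pure point. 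The degenerate cases $\beta\in\{0,\pi\}$ need a brief remark: the two ladders then coincide, giving multiplicity two, and $Q$ must be chosen appropriately (for instance $Q=\pm\tfrac12 I$ when $U(2\pi)=-I$). This is harmless.

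\textbf{Exponential decay.} Inverting $\mathcal F$, we set $\Psi_{m,s}=\mathcal F^{-1}\Phi_{m,s}$. Then $P_n\Psi_{m,s}$ is the $n$-th Fourier coefficient of $W(\theta)e^{im\theta}w_s$, which equals the $(n-m)$-th Fourier coefficient of $W(\theta)w_s$. Because $M$ is a trigonometric polynomial, $U$ is entire in $\theta$. Hence $W$ extends analytically to a strip $|\operatorname{Im}\theta|<\rho$ and is bounded there. The Paley--Wiener estimate then gives
\[
|\widehat{W w_s}(\ell)|\le C e^{-\alpha|\ell|}\quad\text{for any }\alpha<\rho,
\]
with $C$ uniform in $s$. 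This is exactly $\|P_n\Psi_{m,s}\|\le\gamma e^{-\alpha|m-n|}$, with constants independent of $m$.

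\textbf{Main obstacle.} The only delicate point is the construction of a periodic analytic gauge $W$, which requires choosing a logarithm $Q$ of the monodromy. In $SU(2)$ a Hermitian $Q$ with $e^{2\pi iQ}=U(2\pi)$ always exists, by diagonalizing $U(2\pi)$, so this step goes through.
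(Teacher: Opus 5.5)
Your proposal is correct and shares the paper's skeleton: Fourier transform, an $SU(2)$ monodromy with eigenvalues $e^{\pm i\beta}$, the Floquet condition producing the two ladders, and exponential decay. The decay comes from the translation structure $\Psi_{m,s}(n)=\Psi_{0,s}(n-m)$ combined with the analytic-strip bound on Fourier coefficients. The paper's periodic function $G_j(\theta)=\frac{1}{\sqrt{2\pi}}e^{-ij\beta\theta/(2\pi)}Y(\theta)v_j$ is exactly your $W(\theta)w_s$, up to the sign convention for $\mathcal F$, so the decay arguments coincide.

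The genuine difference is completeness. The paper only asserts that, because the Floquet condition gives two independent eigenvectors for each $k$, the family is complete. Counting eigenvectors does not by itself give completeness. One would have to add, for example, the discreteness of the spectrum (compact resolvent, as in the remark after Theorem~\ref{thmeigvalues}), together with the fact that every eigenvector has Floquet form.

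Your Floquet--Lyapunov gauge $W=Ue^{-i\theta Q}$ avoids this. Multiplication by the smooth periodic unitary $W$ preserves $H^1(\mathbb T;\mathbb C^2)$, so $\widehat{\mathcal A}$ is unitarily equivalent to a constant-coefficient operator. The orthonormal basis, the pure point spectrum and the exact eigenvalue set then follow at once.

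There are two sign slips, both harmless. From $W'=-iMW-iWQ$ one gets $W^*\widehat{\mathcal A}W=-i\frac{d}{d\theta}-Q$, not $+Q$. With $U(2\pi)v_\pm=e^{\pm i\beta}v_\pm$, the Floquet condition reads $\lambda=k\mp\beta/(2\pi)$. Neither matters, because the set $\{k\pm\beta/(2\pi)\}$ is symmetric.

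Finally, the cases $\beta\in\{0,\pi\}$ need no separate treatment. For any orthonormal eigenbasis $w_\pm$ of $U(2\pi)$, the choice $Q=\frac{\beta}{2\pi}\left(w_+w_+^*-w_-w_-^*\right)$ satisfies $e^{2\pi iQ}=U(2\pi)$ for every $\beta\in[0,\pi]$.
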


We now consider the pure-shift local case. Retaining $a_{\mathrm{loc}}(m)=\delta_{m,1}$, we define the pure-shift local discrete Dirac operator
\[
D_{\mathrm{sh}}\Psi = (T_{a_{\mathrm{loc}}}^*\psi_2)\otimes e_1 + (T_{a_{\mathrm{loc}}}\psi_1)\otimes e_2.
\]

The corresponding pure-shift local Dirac-Stark operator is $\mathcal S:=D_{\mathrm{sh}}+V$, yielding
\[
\mathcal S\Psi = \left(X\psi_1+T_{a_{\mathrm{loc}}}^*\psi_2\right)\otimes e_1 + \left(T_{a_{\mathrm{loc}}}\psi_1+X\psi_2\right)\otimes e_2.
\]
In matrix representation, the operator $\mathcal S$ acts on a vector $\Psi \equiv \begin{pmatrix} \psi_1 \\ \psi_2 \end{pmatrix}$ as 
\[
\mathcal{S} \begin{pmatrix} \psi_1 \\ \psi_2 \end{pmatrix} = \begin{pmatrix} X & T_{a_{\mathrm{loc}}}^* \\ T_{a_{\mathrm{loc}}} & X \end{pmatrix} \begin{pmatrix} \psi_1 \\ \psi_2 \end{pmatrix}.
\]

\begin{proposition}\label{prop2}
The spectrum of \(\mathcal S\) is pure point and consists of the eigenvalues
\[
\lambda_{k,j} = k+\frac12-j\frac{\sqrt5}{2}, \quad k\in\mathbb Z, \;  j\in\{-1,1\},
\]
with corresponding normalized eigenfunctions  (which forms an orthonormal basis of \(\mathcal{H}\))
\[
\Psi_{k,j} = \frac{\delta_{k+1}\otimes e_1 - \dfrac{1+j\sqrt5}{2}\, \delta_k\otimes e_2}{\sqrt{1+ \left(\dfrac{1+j\sqrt5}{2}\right)^2}}, \quad  k\in\mathbb Z, \; j\in\{-1,1\}.
\]
\end{proposition}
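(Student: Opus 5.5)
The plan is to show that $\mathcal S$ is an orthogonal direct sum of explicit $2\times 2$ matrices, and then read off the spectrum and the eigenbasis block by block.

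First I will make the shifts explicit. With $a_{\mathrm{loc}}(m)=\delta_{m,1}$, the definition gives $(T_{a_{\mathrm{loc}}}u)(m)=u(m+1)$ and $(T_{a_{\mathrm{loc}}}^*u)(m)=u(m-1)$. On basis vectors this means $T_{a_{\mathrm{loc}}}\delta_{k+1}=\delta_k$ and $T_{a_{\mathrm{loc}}}^*\delta_k=\delta_{k+1}$. Hence
\[
(\mathcal S\Psi)(n)=\bigl(n\psi_1(n)+\psi_2(n-1),\;\psi_1(n+1)+n\psi_2(n)\bigr).
\]

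Next, for each $k\in\mathbb Z$, I will set $\mathcal H_k:=\operatorname{span}\{\delta_{k+1}\otimes e_1,\ \delta_k\otimes e_2\}$. A direct computation gives
\[
\mathcal S(\delta_{k+1}\otimes e_1)=(k+1)\,\delta_{k+1}\otimes e_1+\delta_k\otimes e_2,
\]
\[
\mathcal S(\delta_k\otimes e_2)=\delta_{k+1}\otimes e_1+k\,\delta_k\otimes e_2 .
\]
So $\mathcal H_k\subset\operatorname{dom}\mathcal S$ is invariant. In the given basis, $\mathcal S|_{\mathcal H_k}$ is the real symmetric matrix $M_k=\begin{pmatrix}k+1&1\\1&k\end{pmatrix}$. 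The map $k\mapsto\{(k+1,e_1),(k,e_2)\}$ hits every element of the canonical basis $\{\delta_n\otimes e_i\}$ exactly once. Therefore $\mathcal H=\bigoplus_{k}\mathcal H_k$ is an orthogonal decomposition, each $\mathcal H_k$ reduces the self-adjoint operator $\mathcal S$, and $\mathcal S=\bigoplus_k M_k$.

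Then I diagonalize $M_k$. Its characteristic polynomial has roots
\[
k+\tfrac12\pm\sqrt{\tfrac14+1}=k+\tfrac12\pm\tfrac{\sqrt5}{2},
\]
which I write as $\lambda_{k,j}=k+\tfrac12-j\tfrac{\sqrt5}{2}$. An eigenvector has the form $(1,c)$, and the first row gives $c=\lambda_{k,j}-k-1=-\tfrac{1+j\sqrt5}{2}$. After normalization this is exactly $\Psi_{k,j}$. The two eigenvectors of a symmetric matrix with distinct eigenvalues are orthogonal. Together with the orthogonality of the blocks, this shows that $\{\Psi_{k,j}\}$ is an orthonormal basis of $\mathcal H$.

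Finally, since $\mathcal H$ has an orthonormal basis of eigenvectors of $\mathcal S$, the spectrum is pure point and equals the closure of $\{\lambda_{k,j}\}$. This set is discrete, with consecutive gaps bounded below and no finite accumulation point, so it is already closed. No real obstacle arises. The only points needing care are the correct identification of the shift directions, which determines the pairing $(k+1,e_1)\leftrightarrow(k,e_2)$, and the observation that a reducing-subspace decomposition of the unbounded operator is legitimate here. The latter holds because each $\mathcal H_k$ is finite-dimensional, lies in the domain, and the orthogonal projection onto it commutes with $\mathcal S$.
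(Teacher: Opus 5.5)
Your proposal is correct and follows the paper's proof essentially step by step: you decompose $\mathcal H=\bigoplus_k \mathcal H_k$ with $\mathcal H_k=\operatorname{span}\{\delta_{k+1}\otimes e_1,\delta_k\otimes e_2\}$, diagonalize the blocks $\begin{pmatrix}k+1&1\\1&k\end{pmatrix}$, and assemble the orthonormal eigenbasis. Your shift convention $(T_{a_{\mathrm{loc}}}u)(m)=u(m+1)$ is the one the definition of $T_a$ actually gives, and it is what makes $\mathcal H_k$ invariant (the paper's proof writes $u(n-1)$ but then computes with your direction); your labeling $\lambda_{k,j}=k+\tfrac12-j\tfrac{\sqrt5}{2}$ also correctly matches the stated eigenvectors.
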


\begin{remark} {\rm
\begin{enumerate}
    \item [(i)] It follows from Propositions~\ref{prop1} and~\ref{prop2} that the operators \(\mathcal A\) and \(\mathcal S\) exhibit dynamical localization; that is,
for every initial condition \(\delta_k \otimes e_j\), with \(k \in \mathbb Z\) and \(j \in \{1,2\}\), all moments of order \(q>0\) of the position operator are uniformly bounded in time.

\item [(ii)] The operator $\mathcal S$ differs from the local Dirac--Stark operator $\mathcal A$ by the removal of the identity terms: its off-diagonal part contains only the shifts $S$ and $S^*$. This simplification reveals a special block structure, since the Dirac coupling pairs $\delta_{k+1}\otimes e_1$ only with $\delta_k\otimes e_2$. Hence, $\mathcal S$ decomposes into an orthogonal direct sum of two-dimensional invariant subspaces, reducing the spectral problem to a family of $2\times2$ matrices. This block decomposition also explains the exact localization of the eigenfunctions. In contrast with compact localization in discrete Schr\"odinger operators, which is usually caused by vanishing hopping terms and lattice disconnection, here all couplings remain nontrivial. The compact support instead follows from the precise alignment between the lattice shift and the internal spin degrees of freedom. 

    \item [(iii)]  The results of this paper are stated for the linear Stark potential. However, based on the recent theory developed in \cite{Aloisio2}, and on the methods introduced in the present work, it is expected that analogous results should extend naturally to the Dirac setting with sublinear Stark potentials.
\end{enumerate}
} \end{remark}

\begin{remark}{\rm
\emph{Physical interpretation of the exact localization.}
The exact localization observed for the pure-shift Dirac--Stark operator $\mathcal S$ may be understood as a consequence of a complete fragmentation of the transport channels available to the particle.

Indeed, the off-diagonal hopping couples only the pair of spinorial states
\[
\delta_{k+1}\otimes e_1
\qquad\text{and}\qquad
\delta_k\otimes e_2,
\]
while no coupling exists between different pairs. As a result, the infinite lattice decomposes into independent two-level systems (see the proof of Proposition~\ref{prop2} ahead). From the dynamical point of view, a particle initially placed inside one of these blocks can oscillate between the two corresponding spinorial sites, but it cannot propagate to neighboring blocks.

The Stark potential assigns different on-site energies to the two sites of each block, namely $k+1$ and $k$. Consequently, each invariant block becomes a two-level Stark system whose eigenvalues form a pair of energy levels centered around $k+\frac12$. As $k$ varies along the lattice, these independent two-level systems generate a sequence of Stark doublets, reproducing the ladder structure characteristic of Stark-type operators.

Since no mechanism couples different doublets, quantum transport across the lattice is  suppressed. The eigenstates are therefore confined to a single two-level subsystem and become exactly compactly supported.
} \end{remark}

%ssssss
\subsection{Main Result}

Now we present our main result.

\begin{theorem}\label{mainthm}
Let \(\mathcal L\) be the operator defined in \eqref{eqmain0101}. Then the following statements hold.

\begin{enumerate}
    \item[\emph{(i)}] There exists a constant \(\gamma>0\) such that the eigenvalues of \(\mathcal L\) can be enumerated, counting multiplicities, as \(\{\lambda_{m,s}\}_{m\in\mathbb Z,\ s=1,2}\), and satisfy
    \[
    |\lambda_{m,s}-m|\leq \gamma, \, \,  m\in\mathbb Z, \; s=1,2.
    \]

    \item[\emph{(ii)}] If \(a\in\ell^1_r(\mathbb Z)\), with \(r\in\mathbb N\cup\{0\}\), then any orthonormal basis of \(\mathcal{H}\) consisting of eigenfunctions of \(\mathcal L\), denoted \(\{\Phi_{m,s}\}\), satisfies
    \[
    \|P_n\Phi_{m,s}\| \leq \frac{\gamma_r}{\langle n-m\rangle^{r+1}}, \, \,  m,n\in\mathbb Z, \; s=1,2,
    \]
    where \(\gamma_r>0\) is a constant depending only on \(r\) and \(\|a\|_r\).

    \item[\emph{(iii)}] If $0<q<2r-1$, then, for every \(k\in\mathbb Z\) and every \(j \in \{1,2\}\), one has
    \[
    \sup_{t\in\mathbb R} \sum_{n\in\mathbb Z} |n|^q \left\| P_n e^{-it\mathcal L}(\delta_k\otimes e_j) \right\|^2 < \infty.
    \]
\end{enumerate}
\end{theorem}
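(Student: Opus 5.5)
The approach is to treat $\mathcal L$ as a bounded perturbation of the diagonal operator $V=X\otimes I_{\mathbb C^2}$. Write $\mathcal L = V + K$ with $K := D_a + B\otimes I_{\mathbb C^2}$. Since $\|T_a\|\le \|a\|_0$ (Young's inequality) and $B$ is bounded, $K$ is bounded with
\[
\|K\|\le \|a\|_0+1+\|B\|_\infty=:\gamma .
\]

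\emph{Step 1 (part (i)).} The operator $V$ has compact resolvent, and its spectrum is $\mathbb Z$, each eigenvalue having multiplicity exactly $2$. Because $K$ is bounded and self-adjoint, $\mathcal L$ is self-adjoint on $\operatorname{dom} V$ and also has compact resolvent, so its spectrum is purely discrete. To get the enumeration I would apply the Min-Max Principle to $V+c$ and to $\mathcal L+c$ for a large shift $c$, separately on the positive and negative parts of the spectrum. Alternatively, one can use a counting-function argument: the number of eigenvalues of $\mathcal L$ in $[-N,N]$ differs from $2(2N+1)$ by at most a constant, since the eigenvalue counting functions of $V$ and $V+K$ satisfy $N_V(E-\gamma)\le N_{\mathcal L}(E)\le N_V(E+\gamma)$ in the relative sense. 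Listing the eigenvalues in increasing order and pairing them with the doubled ladder $\{m,m\}_{m\in\mathbb Z}$, with an anchor chosen near $0$, then gives $|\lambda_{m,s}-m|\le\gamma$. The enumeration must be anchored consistently at both ends $\pm\infty$. I would handle this as in \cite{Aloisio}, by comparing spectral projections of $V$ and $\mathcal L$ onto $[-N,N]$ for large $N$ through a Riesz-projection or index argument.

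\emph{Step 2 (part (ii)), the main obstacle.} Let $\Phi=\Phi_{m,s}$ and $\lambda=\lambda_{m,s}$. The eigenvalue equation gives, for each $n$,
\[
(n-\lambda)P_n\Phi = -P_n K\Phi .
\]
Hence for $|n-m|>2\gamma$ we have $\|P_n\Phi\|\le \|P_nK\Phi\|/|n-\lambda|\lesssim \langle n-m\rangle^{-1}$, which is the case $r=0$. For higher $r$ I would iterate using commutators. The key identity is
\[
(X\otimes I-\lambda)\Phi=-K\Phi ,
\]
and we commute with the multiplication $(X-m)^{j}$. The commutator $[X,T_a]$ equals $T_{\tilde a}$ with $\tilde a(k)=k\,a(k)$, so each commutation consumes one moment of $a$; this is why $\|a\|_r<\infty$ is needed. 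Inductively, I would show
\[
\|(X-m)^{j}\otimes I\,\Phi\|\le C_j \quad\text{for } j\le r+1,
\]
where $C_j$ is uniform in $m$. The induction uses
\[
(X-m)^{j}\Phi=(X-m)^{j-1}(\lambda-m)\Phi-(X-m)^{j-1}K\Phi ,
\]
together with moving $(X-m)^{j-1}$ through $K$, which generates terms $T_{a^{(i)}}(X-m)^{j-1-i}$ with $a^{(i)}(k)=k^i a(k)$. These are bounded by $\|a\|_i$ and by the induction hypothesis. At $j=r+1$ the last step needs $\|a\|_r$ only, since the top power falls entirely on $\Phi$. Because $\|P_n\Phi\|\,|n-m|^{r+1}\le \|(X-m)^{r+1}\Phi\|$, we obtain the pointwise bound. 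The delicate point is keeping all constants uniform in $m$ and $s$; this holds because $|\lambda-m|\le\gamma$ by part (i).

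\emph{Step 3 (part (iii)).} Expand the initial state in the eigenbasis:
\[
e^{-it\mathcal L}(\delta_k\otimes e_j)=\sum_{m,s}e^{-it\lambda_{m,s}}\langle \Phi_{m,s},\delta_k\otimes e_j\rangle\Phi_{m,s}.
\]
By part (ii), $|\langle\Phi_{m,s},\delta_k\otimes e_j\rangle|\le \gamma_r\langle k-m\rangle^{-r-1}$. The triangle inequality then gives
\[
\|P_ne^{-it\mathcal L}(\delta_k\otimes e_j)\|\le 2\gamma_r^2\sum_m \langle k-m\rangle^{-r-1}\langle n-m\rangle^{-r-1}\lesssim \langle n-k\rangle^{-r-1},
\]
uniformly in $t$, for $r\ge1$ by the standard convolution bound; the case $r=0$ is vacuous since $q>0$. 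Summing, $\sum_n |n|^q\langle n-k\rangle^{-2r-2}$ is finite whenever $q<2r+1$, so the condition $q<2r-1$ in the statement is more than enough.
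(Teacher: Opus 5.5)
Your proposal is correct. For (ii) and (iii) it takes a different route from the paper.

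\textbf{Part (i).} You end up relying, as the paper does, on the enumeration argument of \cite{Aloisio}. Min--Max cannot be applied literally to $V+c$, since neither $V$ nor $\mathcal L$ is semibounded. The cleanest self-contained argument is to follow the real-analytic eigenvalue branches of $V+tK$, $t\in[0,1]$: two branches start at each $m$, and each moves with speed $|\langle\phi(t),K\phi(t)\rangle|\le\|K\|$.

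\textbf{Part (ii).} The paper stays at the level of single sites. It writes the eigenvalue equation at $n$ as $\bigl[(\lambda-n-B(n))I_{\mathbb C^2}+\begin{pmatrix}0&1\\1&0\end{pmatrix}\bigr]P_n\Phi=\sum_{k\neq0}K(k)P_{n-k}\Phi$. From this it deduces $\|P_n\Phi\|\le\frac{4}{|m-n|}\sum_{k\neq0}\|K(k)\|\,\|P_{n-k}\Phi\|$ for $|m-n|>R$, and bootstraps the pointwise decay by splitting the sum at $|k|=|m-n|/2$.

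You instead bootstrap the weighted norms $\|(X-m)^j\Phi\|$ through the commutators $[X,T_a]=-T_{\tilde a}$. This reaches the same exponent $r+1$ under the same hypothesis $a\in\ell^1_r$. It actually proves the stronger bound $\sum_n\langle n-m\rangle^{2(r+1)}\|P_n\Phi\|^2\le C$, uniformly in $m,s$.

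The price is a domain issue that the site-wise recursion never meets: $(X-m)^{j-1}K\Phi$ only makes sense once $\Phi\in\operatorname{dom}X^{j}$ is known, which is what you are proving. You should handle it explicitly:
\begin{itemize}
\item Expand $(n-m)^{j-1}=\sum_i\binom{j-1}{i}(\ell-m)^{j-1-i}(n-\ell)^i$ inside the convolution. This shows directly that $n\mapsto(n-m)^{j-1}(K\Phi)(n)$ is square-summable, with the bound you claim.
\item The identity $(n-\lambda)\Phi(n)=-(K\Phi)(n)$ then gives $\Phi\in\operatorname{dom}X^{j}$.
\end{itemize}

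\textbf{Part (iii).} Your structure is that of Lemma~\ref{mainlemma}. However, you use the sharp bound $\sum_m\langle k-m\rangle^{-\alpha}\langle n-m\rangle^{-\alpha}\lesssim\langle n-k\rangle^{-\alpha}$ for $\alpha>1$, where the paper uses the lossy $\langle n-k\rangle^{-(\alpha-1-\varepsilon_0/2)}$. You therefore obtain the wider range $q<2r+1$, which contains the stated range $q<2r-1$.
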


We conclude the introduction by highlighting the main differences in the analytical treatment between the scalar model and the Dirac setting, which are summarized as follows:

\begin{enumerate}
    \item The first key step is to formulate the problem in the tensor-product space
    \[
    \mathcal{H}=\ell^2(\mathbb{Z})\otimes \mathbb{C}^2.
    \]
    Although this space is naturally isomorphic to $\ell^2(\mathbb{Z},\mathbb{C}^2)$, the tensorial formulation makes the structure of the model much clearer. It separates the lattice variable from the internal spin degree of freedom and allows localization to be expressed naturally through the projections
    \[
    P_n:\mathcal{H}\longrightarrow
    \operatorname{span}\{\delta_n\otimes e_1,\delta_n\otimes e_2\}.
    \]
    Thus, the relevant localized quantity is
    \[
    \|P_n\Phi\|^2
    =
    |\phi_1(n)|^2+|\phi_2(n)|^2,
    \]
    which treats both spin components simultaneously.

    \item A second technical difference is that the eigenvalue equation becomes a coupled system, i.e.,
    \[
    (n+B(n)-\lambda)\phi_1(n)
    +(T_a^*-I)\phi_2(n)=0,
    \]
    \[
    (n+B(n)-\lambda)\phi_2(n)
    +(T_a-I)\phi_1(n)=0.
    \]
    In the scalar problem, the decay estimate is obtained directly for a single eigenfunction component. In the Dirac case, however, neither component can be controlled independently, since each one is coupled to the other. Therefore, the inductive argument must be closed simultaneously for both components in terms of the vector norm $\|P_n\Phi\|$. 

\item 
In the Dirac setting, each lattice site carries a two-dimensional internal degree of freedom, leading to an intrinsic twofold multiplicity for the Stark operator $X \otimes I_{\mathbb{C}^2}$. Consequently, Stark levels naturally split into doublets, producing two spectral ladders in the local model. In contrast, the scalar Schr\"odinger operator has simple Stark levels with no internal degeneracy. This spinorial doubling is essential for the coupled structure of the eigenvalue problem and underlies the matrix-valued localization analysis and the block decomposition observed in the pure-shift case.
\end{enumerate}

%SSSSSS
\section{Proofs of the Propositions}

Next, we prove Proposition~\ref{prop1}. To this end, we first recall a standard estimate for Fourier coefficients of analytic periodic functions.

\begin{lemma}[Theorem 4.1 in \cite{Wright}] 
\label{Fourierlemma}
Let $f$ be a $2\pi$-periodic analytic function on the strip
\[
S_\alpha
=
\{z\in\mathbb C:\ |\operatorname{Im}z|<\alpha\},
\]
for some $\alpha>0$, and suppose that
\[
|f(z)|\le \gamma,
\qquad z\in S_\alpha .
\]

Define
\[
c_n=
\frac1{2\pi}
\int_0^{2\pi}
f(\theta)e^{-in\theta}\,d\theta,
\qquad n\in\mathbb Z.
\]

Then
\[
|c_n|
\le
\gamma e^{-\alpha |n|},
\qquad n\in\mathbb Z.
\]
\end{lemma}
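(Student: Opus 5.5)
The plan is to prove the estimate by the classical contour-shifting argument: move the line of integration in the definition of $c_n$ off the real axis, in the direction where the exponential $e^{-in\theta}$ decays. The case $n=0$ is immediate, since $|c_0|\le \frac1{2\pi}\int_0^{2\pi}|f(\theta)|\,d\theta\le\gamma$. By symmetry ($\theta\mapsto-\theta$ exchanges $n$ and $-n$), it then suffices to treat $n>0$.

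Fix $n>0$ and $0<\beta<\alpha$. Consider the rectangle with vertices $0$, $2\pi$, $2\pi-i\beta$ and $-i\beta$. It lies, together with its closure, inside $S_\alpha$. The function $g(z)=f(z)e^{-inz}$ is analytic there, so Cauchy's theorem gives a vanishing contour integral of $g$ around the rectangle. Since $f$ is $2\pi$-periodic and $e^{-in(z+2\pi)}=e^{-inz}$, the function $g$ is $2\pi$-periodic. Hence the integrals over the two vertical sides, from $0$ to $-i\beta$ and from $2\pi-i\beta$ to $2\pi$, cancel. Consequently
\[
c_n=\frac1{2\pi}\int_0^{2\pi} f(x-i\beta)\,e^{-in(x-i\beta)}\,dx .
\]

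On this shifted segment we have $|e^{-in(x-i\beta)}|=e^{-n\beta}$, and $|f(x-i\beta)|\le\gamma$ by hypothesis. Therefore $|c_n|\le \gamma e^{-n\beta}$. Because this holds for every $\beta<\alpha$, letting $\beta\uparrow\alpha$ yields $|c_n|\le\gamma e^{-\alpha n}$.

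For $n<0$ the same argument works with the rectangle shifted upward to $\operatorname{Im}z=\beta$. There $|e^{-in(x+i\beta)}|=e^{n\beta}=e^{-|n|\beta}$, which gives $|c_n|\le\gamma e^{-\alpha|n|}$.

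The only delicate point is that analyticity and the bound are assumed only on the \emph{open} strip. For this reason the contour is placed strictly inside, at $|\operatorname{Im}z|=\beta<\alpha$, and the limit $\beta\to\alpha$ is taken only at the end, in the final inequality. Nothing about $f$ on the boundary of $S_\alpha$ is needed.
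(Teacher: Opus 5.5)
Your proof is correct. The paper gives no argument of its own here; it simply quotes Theorem 4.1 of the cited reference, and your contour shift is the standard proof of that result: move $[0,2\pi]$ down to $\operatorname{Im}z=-\beta$ for $n>0$ or up to $\operatorname{Im}z=\beta$ for $n<0$, let the vertical sides cancel by periodicity, bound the integrand by $\gamma e^{-|n|\beta}$, and let $\beta\uparrow\alpha$. Keeping the contour strictly inside the open strip and passing to the limit only at the end is the right way to handle the hypothesis being assumed only on $S_\alpha$.
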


\subsubsection*{Proof of Proposition \ref{prop1}}

Let
\[
\Psi
=
\psi_1\otimes e_1+\psi_2\otimes e_2
\in {\rm dom}\,\mathcal A.
\]
The eigenvalue equation
\[
\mathcal A\Psi=\lambda\Psi
\]
is equivalent to
\[
\begin{cases}
X\psi_1+(T_{a_{\rm loc}}^*-I)\psi_2
=
\lambda\psi_1,
\\[0.3em]
(T_{a_{\rm loc}}-I)\psi_1+X\psi_2
=
\lambda\psi_2.
\end{cases}
\]

Applying the Fourier transform
\begin{equation}
\label{Fourier transf}
\mathcal F:\ell^2(\mathbb Z)\to L^2(\mathbb T),
\qquad
\widehat u(\theta)
=
\sum_{n\in\mathbb Z}
u(n)e^{-in\theta},
\end{equation}
and using
\[
\widehat{Xu}
=
i\partial_\theta \widehat u,
\qquad
\widehat{T_{a_{\rm loc}}u}
=
e^{i\theta}\widehat u,
\qquad
\widehat{T_{a_{\rm loc}}^*u}
=
e^{-i\theta}\widehat u,
\]
we obtain
\[
\begin{cases}
i\partial_\theta\widehat\psi_1
+
(e^{-i\theta}-1)\widehat\psi_2
=
\lambda\widehat\psi_1,
\\[0.3em]
(e^{i\theta}-1)\widehat\psi_1
+
i\partial_\theta\widehat\psi_2
=
\lambda\widehat\psi_2.
\end{cases}
\]

Writing
\[
\widehat\Psi(\theta)
=
\begin{pmatrix}
\widehat\psi_1(\theta)
\\
\widehat\psi_2(\theta)
\end{pmatrix},
\]
this system becomes
\[
i\partial_\theta\widehat\Psi(\theta)
+
P(\theta)\widehat\Psi(\theta)
=
\lambda\widehat\Psi(\theta),
\]
where
\[
P(\theta)
=
\begin{pmatrix}
0 & e^{-i\theta}-1
\\
e^{i\theta}-1 & 0
\end{pmatrix}.
\]

Equivalently,
\[
\partial_\theta\widehat\Psi(\theta)
=
-i\lambda\widehat\Psi(\theta)
+
iP(\theta)\widehat\Psi(\theta).
\]

Let $Y(\theta)$ denote the fundamental matrix solution of
\[
Y'(\theta)
=
iP(\theta)Y(\theta),
\qquad
Y(0)=I.
\]

Since $P(\theta)$ is an entire matrix-valued function of $\theta$, standard ODE theory implies that $Y(\theta)$ is entire in $\theta$.

Every solution of the eigenvalue equation is therefore of the form
\[
\widehat\Psi(\theta)
=
e^{-i\lambda\theta}
Y(\theta)v,
\]
for some $v\in\mathbb C^2$. Since $\widehat\Psi$ must be $2\pi$-periodic, we impose
\[
\widehat\Psi(2\pi)
=
\widehat\Psi(0).
\]
Hence $
e^{-i2\pi\lambda}Y(2\pi)v=v.$

Set $
M:=Y(2\pi).$ 
The periodicity condition becomes
\begin{equation}
\label{eq010101}
e^{-i2\pi\lambda}Mv=v.
\end{equation}

Because $P(\theta)$ is self-adjoint,
\[
\frac{d}{d\theta}
\bigl(
Y(\theta)^*Y(\theta)
\bigr)
=
0,
\]
and therefore $Y(\theta)$ is unitary for every $\theta$. Furthermore,
$
\operatorname{tr}P(\theta)=0.$
By Liouville's formula,
\[
\det Y(\theta)
=
\exp
\left(
\int_0^\theta
i\,\operatorname{tr}P(s)\,ds
\right)
=
1.
\]
Consequently,
\[
M\in SU(2),
\]
and  the eigenvalues of $M$ are $
e^{i\beta},$  $e^{-i\beta},$
for some $\beta\in[0,\pi]$.

Let $j\in\{-1,1\}$ and let $v_j$ be a normalized eigenvector satisfying
\[
Mv_j=e^{ij\beta}v_j.
\]
Substituting into \eqref{eq010101} gives
\[
e^{-i2\pi\lambda}e^{ij\beta}=1.
\]

Hence
\[
\lambda
=
k+j\frac{\beta}{2\pi},
\qquad
k\in\mathbb Z.
\]

Thus the eigenvalues are
\[
\lambda_{k,j}
=
k+j\frac{\beta}{2\pi},
\qquad
k\in\mathbb Z,
\quad
j\in\{-1,1\}.
\]

The corresponding normalized eigenfunctions in Fourier representation are
\[
\widehat\Psi_{k,j}(\theta)
=
\frac1{\sqrt{2\pi}}
e^{-i\lambda_{k,j}\theta}
Y(\theta)v_j.
\]

To prove exponential localization, define
\[
G_j(\theta)
=
\frac1{\sqrt{2\pi}}
e^{-ij\beta\theta/(2\pi)}
Y(\theta)v_j.
\]

Then
\[
\widehat\Psi_{m,j}(\theta)
=
e^{-im\theta}G_j(\theta).
\]

Taking Fourier coefficients yields
\[
\Psi^{(r)}_{m,j}(n)
=
\Psi^{(r)}_{0,j}(n-m),
\qquad
r=1,2.
\]

Since $Y(\theta)$ is entire, every component of $G_j$ is entire as well. In particular, for every $\alpha>0$ there exists a constant $\gamma_\alpha$ such that
\[
|G_j(\theta)|
\le
\gamma_\alpha,
\qquad
|\operatorname{Im}\theta|<\alpha.
\]

Applying Lemma~\ref{Fourierlemma} componentwise, we obtain a constant
$\gamma'_\alpha$ such that
\[
|\Psi_{0,j}^{(r)}(n)|
\le
\gamma'_\alpha e^{-\alpha|n|},
\qquad
n\in\mathbb Z,
\quad
r=1,2.
\]

Therefore,
\[
\|P_n\Psi_{m,j}\|
=
\|\Psi_{m,j}(n)\|_{\mathbb C^2}
=
\|\Psi_{0,j}(n-m)\|_{\mathbb C^2}
\le
\sqrt2\,\gamma'_\alpha
e^{-\alpha|m-n|}.
\]

Thus
\[
\|P_n\Psi_{m,j}\|
\le
\sqrt2\,\gamma'_\alpha
e^{-\alpha|m-n|}.
\]

Since the Floquet condition above yields exactly two independent eigenvectors for each $k\in\mathbb Z$, the family
\[
\{\Psi_{m,j}\}_{m\in\mathbb Z,\ j\in\{-1,1\}}
\]
is complete in $\mathcal H$. Choosing the vectors $v_j$ orthonormally, this family becomes an orthonormal basis of $\mathcal H$. The proof is complete.
\hfill $\square$

\subsubsection*{Proof of Proposition \ref{prop2}}

We exploit the  structure of the pure-shift operator
\[
\mathcal{S} =
\begin{pmatrix}
X & T_{a_{\mathrm{loc}}}^* \\
T_{a_{\mathrm{loc}}} & X
\end{pmatrix},
\qquad a_{\mathrm{loc}}(m)=\delta_{m,1},
\]
with
\[
(T_{a_{\mathrm{loc}}}u)(n)=u(n-1), 
\qquad 
(T_{a_{\mathrm{loc}}}^*u)(n)=u(n+1).
\]
The operator $\mathcal{S}$ acts  as
\[
(\mathcal{S}\Psi)(n)
=
\begin{pmatrix}
n & 1 \\
1 & n
\end{pmatrix}
\Psi(n),
\]
and it preserves the two-dimensional subspaces
\[
\mathcal{H}_k = \operatorname{span}\{\delta_{k+1}\otimes e_1,\; \delta_k\otimes e_2\},
\qquad k\in\mathbb{Z}.
\]

Indeed, a direct computation shows that
\[
\mathcal{S}(\delta_{k+1}\otimes e_1)
=
(k+1)\delta_{k+1}\otimes e_1 + \delta_k\otimes e_2,
\]
\[
\mathcal{S}(\delta_k\otimes e_2)
=
\delta_{k+1}\otimes e_1 + k\,\delta_k\otimes e_2.
\]

Hence each $\mathcal{H}_k$ is invariant under $\mathcal{S}$, and the restriction
$\mathcal{S}|_{\mathcal{H}_k}$ is represented, in the basis
\(\{\delta_{k+1}\otimes e_1,\delta_k\otimes e_2\}\), by the matrix
\[
S_k =
\begin{pmatrix}
k+1 & 1 \\
1 & k
\end{pmatrix},
\]
and
\[
\mathcal{S} = \bigoplus_{k\in\mathbb{Z}} S_k.
\]

The characteristic polynomial of $S_k$ is
\[
\det(S_k-\lambda I)
=
(k+1-\lambda)(k-\lambda)-1.
\]
and to obtain its eigenvalues
\[
\lambda^2 - (2k+1)\lambda + (k(k+1)-1)=0,
\]
which are given by
\[
\lambda_{k,\pm}
=
k+\frac{1}{2}\pm \frac{\sqrt{5}}{2}.
\]

For each eigenvalue $\lambda_{k,j}$, $j=\pm 1$, an eigenvector is given by
\[
v_{k,j}
=
\begin{pmatrix}
1 \\
-\dfrac{1+j\sqrt{5}}{2}
\end{pmatrix}.
\]
Thus the corresponding eigenfunction in $\mathcal{H}_k$ is
\[
\Psi_{k,j}
=
\delta_{k+1}\otimes e_1
-
\frac{1+j\sqrt{5}}{2}\,\delta_k\otimes e_2.
\]
We keep the same notation $\Psi_{k,j}$ for its normalized version.

The subspaces $\mathcal{H}_k$ are mutually orthogonal and their direct sum
satisfies
\[
\mathcal{H} = \bigoplus_{k\in\mathbb{Z}} \mathcal{H}_k.
\]
Since each $S_k$ is diagonalizable with an orthonormal eigenbasis,
the family $\{\Psi_{k,j}\}_{k\in\mathbb{Z},\,j=\pm 1}$ forms an orthonormal basis of $\mathcal{H}$.

It then follows that $\mathcal{S}$ has pure point spectrum
\[
\sigma(\mathcal{S})=
\left\{
k+\frac{1}{2}\pm\frac{\sqrt{5}}{2}
:\; k\in\mathbb{Z}
\right\},
\]
and eigenfunctions with compact support on exactly two lattice sites, completing the proof.
\hfill $\square$

%SSSSSS
\section{Proof of Theorem \ref{mainthm}}\label{secmainthm}

Next, we prove Theorem \ref{mainthm}. To this end, some preliminary results are required. 

\begin{theorem}\label{thmeigvalues} Let \(\ell^2(\mathbb Z)\otimes\mathbb C^2\) and let \(H=A+X\otimes I_{\mathbb C^2}\), where \(A\) is a bounded self-adjoint
operator on \(\ell^2(\mathbb Z)\otimes\mathbb C^2\). Then \(H\) is self-adjoint and has purely discrete spectrum. Moreover, its eigenvalues can be enumerated, counting multiplicities, as \(\{\lambda_{n,s}\}_{n\in\mathbb Z,\ s \in \{1,2\}}\), and
\[
|\lambda_{n,s}-n|\leq \|A\|,\, \, n\in\mathbb Z,\ s \in \{1,2\}.
\]
\end{theorem}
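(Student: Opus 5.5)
The proof will proceed in three steps: self-adjointness via Kato--Rellich, compactness of the resolvent, and an eigenvalue comparison through the Min-Max Principle applied to shifted operators.

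\textbf{Self-adjointness and discreteness.} The operator $V=X\otimes I_{\mathbb C^2}$ is self-adjoint on $\operatorname{dom}\,V$. Its spectrum is $\mathbb Z$, and each eigenvalue $n$ has multiplicity two, with eigenspace $\operatorname{Ran}P_n$. Its resolvent $(V-i)^{-1}$ is compact, being diagonal with entries $(n-i)^{-1}\to0$. Since $A$ is bounded and self-adjoint, the Kato--Rellich theorem (with relative bound $0$) gives that $H=V+A$ is self-adjoint on $\operatorname{dom}\,V$. For compactness of the resolvent of $H$ we use the second resolvent identity
\[
(H-i)^{-1}=(V-i)^{-1}-(H-i)^{-1}A(V-i)^{-1}.
\]
The right-hand side is compact, so $H$ has purely discrete spectrum. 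Each eigenvalue has finite multiplicity, and the spectrum accumulates only at $\pm\infty$.

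\textbf{Eigenvalue comparison.} Since $H$ is unbounded in both directions, the Min-Max Principle cannot be applied directly. Instead we reduce to semibounded operators by a counting-function argument. For $E\in\mathbb R$, let $N_H(E_1,E_2)$ denote the number of eigenvalues of $H$, counted with multiplicity, in $[E_1,E_2)$. The key claim is the pair of inequalities
\[
N_V(E_1+\|A\|,E_2-\|A\|)\le N_H(E_1,E_2)\le N_V(E_1-\|A\|,E_2+\|A\|).
\]
Both follow from the variational characterization. If $W$ is the spectral subspace of $V$ for $[E_1+\|A\|,E_2-\|A\|)$, then for $\Psi\in W$ we have
\[
\|(H-c)\Psi\|\le (E_2-E_1)/2\,\|\Psi\|,\qquad c=(E_1+E_2)/2 .
\]
Together with the finite-dimensionality of $W$, this yields, via the spectral theorem (Weyl's criterion in quadratic form $\|(H-c)\Psi\|^2$), at least $\dim W$ eigenvalues of $H$ in the closed interval $[E_1,E_2]$. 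The reverse inequality holds symmetrically, since $V=H-A$.

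A cleaner alternative, which I would try first, is the homotopy $H_t=V+tA$, $t\in[0,1]$. This family is analytic of type (A) with compact resolvent, so by Kato's theory its eigenvalues can be chosen as continuous (indeed Lipschitz) branches $\lambda(t)$ with $|\lambda'(t)|\le\|A\|$, because $\lambda'=\langle A\phi,\phi\rangle$ for normalized eigenvectors. At $t=0$ the eigenvalues are $n$, each with multiplicity two. Labelling the two branches starting at $n$ by $\lambda_{n,1},\lambda_{n,2}$ gives $|\lambda_{n,s}(1)-n|\le\|A\|$ directly. Moreover, every eigenvalue of $H$ is reached by some branch, and with correct multiplicity: the total multiplicity is conserved along continuous deformations of isolated eigenvalue groups, and eigenvalues cannot escape to infinity in finite time, since they are Lipschitz. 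This gives the desired enumeration.

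\textbf{Main obstacle.} The delicate step is the rigorous bookkeeping of the enumeration counting multiplicities, namely that the branches exhaust the spectrum and do not double-count, in this two-sided unbounded setting. To handle it I would use the Riesz projections
\[
\frac{1}{2\pi i}\oint_{\Gamma}(H_t-z)^{-1}\,dz
\]
around intervals avoiding the spectrum. These projections are continuous in $t$, so the ranks are locally constant and equal to their values at $t=0$. Combined with the counting inequality above, this transfers the counting from $V$ to $H$.
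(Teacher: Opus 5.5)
Your proposal is correct, but your main route differs from the paper's. The paper's remark after the theorem gets compactness of the resolvent from the second resolvent identity, as you do. It then defers to the Min--Max argument of the scalar Stark case, noting only that each level $n$ of $X\otimes I_{\mathbb C^2}$ now has multiplicity two, so the same comparison yields two eigenvalues within $\|A\|$ of each $n$.

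You instead run analytic perturbation theory along $H_t=V+tA$, $t\in[0,1]$. This is a self-adjoint holomorphic family of type (A) with compact resolvent. The Rellich--Kato theorem on analytic eigenvalue branches (Kato's book, Ch.~VII, \S 3) then provides real-analytic branches $\mu_k(t)$ which, for every $t$, represent all eigenvalues of $H_t$ counted with multiplicity. The formula $\mu_k'=\langle A\phi_k,\phi_k\rangle$ gives $|\mu_k(1)-\mu_k(0)|\le\|A\|$.

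What your route buys is that the labelled enumeration over $\mathbb Z\times\{1,2\}$, multiplicities included, comes directly from the branches. You never have to fix an indexing origin for a spectrum unbounded in both directions. That is exactly the issue you rightly flag as blocking a naive use of Min--Max, and which the paper leaves to the reference. The cost is heavier machinery; the variational route is more elementary.

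Your ``main obstacle'' is precisely the content of that theorem, so cite it rather than redo the bookkeeping. If you do redo it, note that a Riesz projection over a \emph{fixed} contour need not keep its $t=0$ rank on all of $[0,1]$. Eigenvalues can cross the contour as $t$ varies; for instance, with $A=cI$ and $c\ge 1$, every real number is an eigenvalue of some $H_t$. So the rank argument must be done locally in $t$ and patched using compactness of $[0,1]$ and the Lipschitz bound.

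Likewise, your counting inequalities alone do not yet produce the enumeration. Turning them into a bijection with $|\lambda_{n,s}-n|\le\|A\|$ needs an extra matching (Hall-type) step. Also, your argument actually counts eigenvalues in the closed interval $[E_1,E_2]$, not $[E_1,E_2)$.
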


\begin{remark}\normalfont
Using the second resolvent identity, one verifies that \(H\) has compact resolvent. Hence, its spectrum is discrete. In this case, the proof of Theorem \ref{thmeigvalues} follows from the Min--Max Principle as in \cite{Aloisio}. We note that the only difference is that the multiplication operator $X\otimes I_{\mathbb C^2}$ has eigenvalues \(n\in\mathbb Z\), each with multiplicity two. Namely,
\[
(X\otimes I_{\mathbb C^2})(\delta_n\otimes e_s)
=
n(\delta_n\otimes e_s),
\, \, n\in\mathbb Z,\ s\in\{1,2\}.
\]
Therefore, the same Min--Max argument gives two eigenvalues near each
Stark level \(n\). Hence the eigenvalues of \( H\), counted with
multiplicities, can be enumerated as
\[
\{\lambda_{n,s}\}_{n\in\mathbb Z,\ s \in \{1,2\}},
\]
and satisfy 
\[
|\lambda_{n,s}-n|\leq \|A\|,
\, \, n\in\mathbb Z,\ s \in \{1,2\}.
\]
\end{remark}

\begin{lemma}[Power-law ULE]\label{mainlemma} Let \(H\) be a self-adjoint operator on $\ell^2(\mathbb Z)\otimes \mathbb C^2$.
Assume that \(H\) possesses an orthonormal basis of eigenfunctions $\{\Phi_{m,s}\}_{m\in\mathbb Z,\ s\in\{1,2\}}$ associated with eigenvalues $\lambda_{m,s}$, such that
\[
H\Phi_{m,s}=\lambda_{m,s}\Phi_{m,s},
\]
and suppose that, for some \(\alpha>0\),
\[
\|P_n\Phi_{m,s}\|
\leq
\frac{\gamma_\alpha}{\langle n-m\rangle^\alpha},
\, \,
m,n\in\mathbb Z,\ s\in\{1,2\}.
\]
If
\[
\alpha>\frac32+\frac q2,
\]
then, for every \(k\in\mathbb Z\) and every \(j\in\{1,2\}\),
\[
\sup_{t\in\mathbb R}
\sum_{n\in\mathbb Z}
|n|^q
\bigl\|P_n e^{-it H}(\delta_k\otimes e_j)\bigr\|^2
<\infty .
\]
\end{lemma}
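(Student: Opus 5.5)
We expand the initial state in the eigenbasis and estimate each term. Since $\{\Phi_{m,s}\}$ is an orthonormal basis,
\[
e^{-itH}(\delta_k\otimes e_j)=\sum_{m\in\mathbb Z}\sum_{s=1}^2 e^{-it\lambda_{m,s}}\,\langle \Phi_{m,s},\delta_k\otimes e_j\rangle\,\Phi_{m,s}.
\]
The coefficients satisfy $|\langle \Phi_{m,s},\delta_k\otimes e_j\rangle|\le \|P_k\Phi_{m,s}\|\le \gamma_\alpha\langle k-m\rangle^{-\alpha}$. The phases are unimodular, so the triangle inequality for $P_n$ applied to the expansion gives a bound uniform in $t$:
\[
\bigl\|P_n e^{-itH}(\delta_k\otimes e_j)\bigr\|\le \sum_{m,s}\gamma_\alpha\langle k-m\rangle^{-\alpha}\,\|P_n\Phi_{m,s}\|\le 2\gamma_\alpha^2\sum_{m\in\mathbb Z}\langle k-m\rangle^{-\alpha}\langle n-m\rangle^{-\alpha}.
\]
Each sum converges absolutely because $\alpha>3/2>1$. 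This absolute convergence also justifies interchanging $P_n$ with the series.

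Next we estimate the convolution sum. The standard discrete convolution estimate gives, for $\alpha>1$,
\[
\sum_{m}\langle k-m\rangle^{-\alpha}\langle n-m\rangle^{-\alpha}\le C_\alpha\langle n-k\rangle^{-\alpha}.
\]
To see this, split the sum over $m$ according to whether $|m-k|\ge|n-k|/2$ or $|m-n|\ge|n-k|/2$; at least one holds by the triangle inequality. On each region, bound the corresponding factor by $2^\alpha\langle n-k\rangle^{-\alpha}$ and sum the remaining factor, which is summable since $\alpha>1$. Hence
\[
\sup_t\bigl\|P_n e^{-itH}(\delta_k\otimes e_j)\bigr\|^2\le C\langle n-k\rangle^{-2\alpha}.
\]

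It remains to sum the moments. Using $|n|^q\le C_q(\langle n-k\rangle^q+|k|^q)$, we get
\[
\sum_n |n|^q\langle n-k\rangle^{-2\alpha}\le C_q\sum_n\langle n-k\rangle^{q-2\alpha}+C_q|k|^q\sum_n\langle n-k\rangle^{-2\alpha},
\]
which is finite whenever $2\alpha-q>1$. Our hypothesis $\alpha>\tfrac32+\tfrac q2$ means $2\alpha-q>3$, which is stronger than needed. The sup over $t$ was taken before summing in $n$, so the whole bound is uniform in $t$.

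No step here is a genuine obstacle. The only points to check are that the convolution estimate is valid (it needs $\alpha>1$) and that the series manipulations are legitimate, which follows from absolute convergence. The stated threshold $\tfrac32+\tfrac q2$ presumably comes from the authors using a cruder Cauchy--Schwarz route, as in \cite{Aloisio}. Our argument gives the result under the stated hypothesis in any case.
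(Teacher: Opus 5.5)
Your proof is correct, and its skeleton is the paper's. Both expand $e^{-itH}(\delta_k\otimes e_j)$ in the eigenbasis, bound the coefficients by $\|P_k\Phi_{m,s}\|$, use the triangle inequality to reduce to the convolution $\sum_m\langle k-m\rangle^{-\alpha}\langle n-m\rangle^{-\alpha}$, and then sum the moments.

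The only real difference is the convolution step. The paper proves nothing there. It imports from the scalar case \cite{Aloisio} the lossy bound $C_{\alpha,k}\langle n-k\rangle^{-(\alpha-1-\varepsilon_0/2)}$, whose constant depends on $k$. It then needs $2\alpha-2-\varepsilon_0>q+1$ to sum the moments. This loss of $1+\varepsilon_0/2$ in the exponent is exactly where the threshold $\alpha>\frac32+\frac q2$ comes from.

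Your splitting according to $|m-k|\ge|n-k|/2$ or $|m-n|\ge|n-k|/2$ gives, in a self-contained way, the sharp decay $C_\alpha\langle n-k\rangle^{-\alpha}$ with a $k$-independent constant. As a result, your argument proves the lemma under the weaker hypothesis $\alpha>\max\{1,\frac{1+q}{2}\}$. Combined with item (ii) at $\alpha=r+1$, this would widen item (iii) of Theorem \ref{mainthm} from $0<q<2r-1$ to $0<q<2r+1$ for $r\ge1$.

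The paper's version gains only brevity by deferring to the scalar case; yours is self-contained and sharper.
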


\begin{proof}
Fix \(k\in\mathbb Z\) and \(j\in\{1,2\}\). Since \(\{\Phi_{m,s}\}_{m\in\mathbb Z,\ s\in\{1,2\}}\) is an orthonormal basis of eigenfunctions of \(H\), we have 
\[
e^{-it H}(\delta_k\otimes e_j)
=
\sum_{m\in\mathbb Z}\sum_{s=1}^2
e^{-it\lambda_{m,s}}
\langle \delta_k\otimes e_j,\Phi_{m,s}\rangle
\Phi_{m,s}.
\]
Applying \(P_n\), we get
\[
P_n e^{-it H}(\delta_k\otimes e_j)
=
\sum_{m\in\mathbb Z}\sum_{s=1}^2
e^{-it\lambda_{m,s}}
\langle \delta_k\otimes e_j,\Phi_{m,s}\rangle
P_n\Phi_{m,s}.
\]
Therefore, by the triangle inequality,
\[
\bigl\|P_n e^{-it H}(\delta_k\otimes e_j)\bigr\|
\leq
\sum_{m\in\mathbb Z}\sum_{s=1}^2
\bigl|\langle \delta_k\otimes e_j,\Phi_{m,s}\rangle\bigr|
\,\|P_n\Phi_{m,s}\|.
\]
Since \(\delta_k\otimes e_j\in \operatorname{ran} P_k\), it follows that
\[
\bigl|\langle \delta_k\otimes e_j,\Phi_{m,s}\rangle\bigr|
=
\bigl|\langle \delta_k\otimes e_j,P_k\Phi_{m,s}\rangle\bigr|
\leq
\|P_k\Phi_{m,s}\|.
\]
Thus,
\[
\bigl\|P_n e^{-it H}(\delta_k\otimes e_j)\bigr\|
\leq
\sum_{m\in\mathbb Z}\sum_{s=1}^2
\|P_k\Phi_{m,s}\|\,\|P_n\Phi_{m,s}\|.
\]
Using the assumed power-law decay, we obtain
\[
\bigl\|P_n e^{-it H}(\delta_k\otimes e_j)\bigr\|
\leq
2\gamma_\alpha^2
\sum_{m\in\mathbb Z}
\frac{1}{\langle k-m\rangle^\alpha \langle n-m\rangle^\alpha}.
\]

We now estimate the convolution term. Since finitely many indices do not
affect the boundedness of the moment, we may restrict the analysis to
\(|n-k|\geq 2\) and \(|n|\geq 2\). Choose \(\varepsilon_0>0\) such that
\[
\alpha>\frac32+\frac q2+\frac{\varepsilon_0}{2}.
\]
Then, as in the scalar case (see \cite{Aloisio}), the product is summable in \(m\), and there exists a constant \(C_{\alpha,k}>0\) such that
\[
\sum_{m\in\mathbb Z}
\frac{1}{\langle k-m\rangle^\alpha \langle n-m\rangle^\alpha}
\leq
\frac{C_{\alpha,k}}{\langle n-k\rangle^{\alpha-1-\varepsilon_0/2}}.
\]
Thus,
\[
\bigl\|P_n e^{-it H}(\delta_k\otimes e_j)\bigr\|
\leq
\frac{C_{\alpha,k}}{\langle n-k\rangle^{\alpha-1-\varepsilon_0/2}},
\]
uniformly in \(t\in\mathbb R\). Hence,
\[
\sum_{n\in\mathbb Z}
|n|^q
\bigl\|P_n e^{-it H}(\delta_k\otimes e_j)\bigr\|^2
\leq
C_{\alpha,k}
\sum_{n\in\mathbb Z}
\frac{|n|^q}{\langle n-k\rangle^{2\alpha-2-\varepsilon_0}}.
\]
By the choice of \(\varepsilon_0\),
\[
2\alpha-2-\varepsilon_0>q+1,
\]
and therefore the last series converges. Thus, there exists a constant
\(C_{q,k}>0\)  such that
\[
 \sup_{t\in\mathbb R} \sum_{n\in\mathbb Z}
|n|^q
\bigl\|P_n e^{-itH}(\delta_k\otimes e_j)\bigr\|^2
< C_{q,k}.
\]
\end{proof}

%ssssss
\subsection*{Proof of Theorem \ref{mainthm}}

Note that item (i) follows directly from Theorem 2.2 by taking $A = D_a + B \otimes I_{\mathbb{C}^2}.$ Moreover, by Lemma \ref{mainlemma}, item (ii) implies item (iii). Therefore, it remains only to prove item (ii).

Let
\[
\Phi_{m,s}
=
\phi_{m,s}^{(1)}\otimes e_1
+
\phi_{m,s}^{(2)}\otimes e_2
\in\mathcal H
\]
be a normalized eigenfunction of \(\mathcal L\), that is,
\[
\mathcal L\Phi_{m,s}
=
\lambda_{m,s}\Phi_{m,s}.
\]

For each \(n\in\mathbb Z\), we identify
\[
P_n\Phi_{m,s}
=
\begin{pmatrix}
\phi_{m,s}^{(1)}(n)\\[1mm]
\phi_{m,s}^{(2)}(n)
\end{pmatrix}
\in\mathbb C^2.
\]

Recall that
\[
D_a
=
\begin{pmatrix}
0&T_a^*-I\\
T_a-I&0
\end{pmatrix}.
\]
By the definition of \(T_a\),
\[
(T_a f)(n)
=
\sum_{\ell\in\mathbb Z}a(\ell-n)f(\ell)
=
\sum_{k\in\mathbb Z}a(-k)f(n-k),
\]
whereas
\[
(T_a^*f)(n)
=
\sum_{k\in\mathbb Z}a^*(k)f(n-k).
\]

For each \(k\in\mathbb Z\), set
\[
K(k)
:=
\begin{pmatrix}
0&a^*(k)\\[1mm]
a(-k)&0
\end{pmatrix}.
\]
Then
\[
\sum_{k\in\mathbb Z}K(k)P_{n-k}\Phi_{m,s}
=
\begin{pmatrix}
(T_a^*\phi_{m,s}^{(2)})(n)\\[1mm]
(T_a\phi_{m,s}^{(1)})(n)
\end{pmatrix}.
\]

Since \(a(0)=0\), we have $K(0)=0.$ Thus,
\[
\sum_{k\in\mathbb Z}K(k)P_{n-k}\Phi_{m,s}
=
\sum_{k\neq0}K(k)P_{n-k}\Phi_{m,s}.
\]
Evaluating the eigenvalue equation
\[
\mathcal L\Phi_{m,s}
=
\lambda_{m,s}\Phi_{m,s}
\]
at the site \(n\), we obtain
\[
\sum_{k\neq0}K(k)P_{n-k}\Phi_{m,s}
-
\begin{pmatrix}
0&1\\
1&0
\end{pmatrix}
P_n\Phi_{m,s}
+
\bigl(n+B(n)\bigr)P_n\Phi_{m,s}
=
\lambda_{m,s}P_n\Phi_{m,s}.
\]
Equivalently,
\[
\left[
\bigl(\lambda_{m,s}-n-B(n)\bigr)I_{\mathbb C^2}
+
\begin{pmatrix}
0&1\\
1&0
\end{pmatrix}
\right]
P_n\Phi_{m,s}
=
\sum_{k\neq0}K(k)P_{n-k}\Phi_{m,s}.
\]
Taking norms in \(\mathbb C^2\) and using the triangle inequality, we obtain
\[
\begin{aligned}
\bigl|\lambda_{m,s}-n-B(n)\bigr|
\|P_n\Phi_{m,s}\|
&\leq
\sum_{k\neq0}
\|K(k)P_{n-k}\Phi_{m,s}\|
\\
&\, \, +
\left\|
P_n\Phi_{m,s}
\right\|.
\end{aligned}
\]

By item~\textup{(i)}, there exists \( \gamma>0\) such that
\[
|\lambda_{m,s}-m-B(n)|
\leq\gamma, \, \, m,n \in \mathbb{Z}, \, s \in \{1,2\}.
\]
Note that
\[
\lambda_{m,s}-n-B(n)
=
(m-n)+\lambda_{m,s}-m-B(n).
\]
Therefore, by the reverse triangle inequality,
\[
\begin{aligned}
|\lambda_{m,s}-n-B(n)|
&\geq
|m-n|
-
|\lambda_{m,s}-m-B(n)|
\\
&\geq
|m-n|-\gamma.
\end{aligned}
\]
If
\[
|m-n|>2\gamma,
\]
then
\[
|m-n|-\gamma
\geq
\frac{|m-n|}{2}.
\]
Hence,
\[
|\lambda_{m,s}-n-B(n)|
\geq
\frac{|m-n|}{2}.
\]
Combining the inequalities above, we obtain
\[
\frac{|m-n|}{2}
\|P_n\Phi_{m,s}\|
\leq
\sum_{k\neq0}
\|K(k)\|
\|P_{n-k}\Phi_{m,s}\|
+
\|P_n\Phi_{m,s}\|.
\]
Thus,
\[
\|P_n\Phi_{m,s}\|
\leq
\frac{2}{|m-n|}
\sum_{k\neq0}
\|K(k)\|
\|P_{n-k}\Phi_{m,s}\|
+
\frac{2}{|m-n|}
\|P_n\Phi_{m,s}\|.
\]

Now choose \(R>0\) such that $R>2\gamma \, \, \text{and}\, \,  R\geq4.$ For \(|m-n|>R\), one has
\[
\frac{2}{|m-n|}
\leq\frac12.
\]
Hence, 
\[
\left(
1-\frac{2}{|m-n|}
\right)
\|P_n\Phi_{m,s}\|
\leq
\frac{2}{|m-n|}
\sum_{k\neq0}
\|K(k)\|
\|P_{n-k}\Phi_{m,s}\|.
\]
Since
\[
1-\frac{2}{|m-n|}
\geq\frac12,
\]
we conclude that
\begin{equation}\label{eqmainrec}
\|P_n\Phi_{m,s}\|
\leq
\frac{4}{|m-n|}
\sum_{k\neq0}
\|K(k)\|
\|P_{n-k}\Phi_{m,s}\|,
\, \, 
|m-n|>R.    
\end{equation}

We proceed by induction on \(r\in\mathbb{N}\cup\{0\}\).

First, let \(r=0\). Since \(\Phi_{m,s}\) is normalized, we have
\[
\|P_{n-k}\Phi_{m,s}\|\leq 1
\]
for every \(k\in\mathbb{Z}\). Therefore, for \(|m-n|>R\), the estimate in \eqref{eqmainrec}  gives
\[
\|P_n\Phi_{m,s}\|
\leq
\frac{4}{|m-n|}
\sum_{k\neq 0}\|K(k)\|.
\]

Since
\[
K(k)=
\begin{pmatrix}
0 & a^*(k)\\
a(-k) & 0
\end{pmatrix},
\]
we have
\[
\|K(k)\|
=
\max\{|a(k)|,|a(-k)|\}
\leq
|a(k)|+|a(-k)|.
\]
Hence,
\[
\sum_{k\neq 0}\|K(k)\|
\leq
2\sum_{k\in\mathbb{Z}}|a(k)|
=
2\|a\|_0.
\]
Thus,
\[
\|P_n\Phi_{m,s}\|
\leq
\frac{8\|a\|_0}{|m-n|},
\, \,  |m-n|>R.
\]
For \(|m-n|\leq R\), we use the trivial estimate $\|P_n\Phi_{m,s}\|\leq 1.$ Therefore, there exists a  \(\gamma_0>0\) such that
\[
\|P_n\Phi_{m,s}\|
\leq
\frac{\gamma_0}{\langle m-n\rangle},
\, \, 
m,n\in\mathbb{Z},\, \,  s=1,2.
\]
This proves the case \(r=0\).

Now suppose that the result holds for some
\(r\in\mathbb{N}\cup\{0\}\). Namely, assume that, whenever
\(a\in\ell^1_r(\mathbb{Z})\), one has
\[
\|P_n\Phi_{m,s}\|
\leq
\frac{\gamma_r}{\langle m-n\rangle^{r+1}},
\, \, 
m,n\in\mathbb{Z},\, \,  s=1,2.
\]

We prove the corresponding estimate for \(r+1\). Assume that $a\in\ell^1_{r+1}(\mathbb{Z}).$ Since $\ell^1_{r+1}(\mathbb{Z})\subset\ell^1_r(\mathbb{Z}),$ the induction hypothesis applies.

Let \(|m-n|>R\). From the  estimate in \eqref{eqmainrec}, we have
\[
\|P_n\Phi_{m,s}\|
\leq
\frac{4}{|m-n|}
\sum_{k\neq 0}
\|K(k)\|\,\|P_{n-k}\Phi_{m,s}\|.
\]
Since $K(0) =0$, we split the sum into two parts
\begin{eqnarray}\label{eqmaineq0202}\nonumber
\|P_n\Phi_{m,s}\|
&\leq&
\frac{4}{|m-n|}
\sum_{0<|k|\leq |m-n|/2}
\|K(k)\|\,\|P_{n-k}\Phi_{m,s}\|
\\
&+&
\frac{4}{|m-n|}
\sum_{|k|>|m-n|/2}
\|K(k)\|\,\|P_{n-k}\Phi_{m,s}\|.
\end{eqnarray}

For \(0<|k|\leq |m-n|/2\), the reverse triangle inequality gives
\[
|m-(n-k)|
=
|m-n+k|
\geq
|m-n|-|k|
\geq
\frac{|m-n|}{2}.
\]
Therefore, by the induction hypothesis,
\[
\|P_{n-k}\Phi_{m,s}\|
\leq
\frac{\gamma_r}
{\langle m-n+k\rangle^{r+1}}
\leq
\frac{2^{r+1}\gamma_r}
{|m-n|^{r+1}}.
\]
Hence,
\[
\begin{aligned}
&\frac{4}{|m-n|}
\sum_{0<|k|\leq |m-n|/2}
\|K(k)\|\,\|P_{n-k}\Phi_{m,s}\|
\\
&\, \, \leq
\frac{2^{r+3}\gamma_r}
{|m-n|^{r+2}}
\sum_{k\neq 0}\|K(k)\|.
\end{aligned}
\]

For the second part of the sum, we use
\[
\|P_{n-k}\Phi_{m,s}\|\leq 1.
\]
Namely, one has
\[
\begin{aligned}
&\frac{4}{|m-n|}
\sum_{|k|>|m-n|/2}
\|K(k)\|\,\|P_{n-k}\Phi_{m,s}\|
\\
&\, \, \leq
\frac{4}{|m-n|}
\sum_{|k|>|m-n|/2}\|K(k)\|.
\end{aligned}
\]

Since \(|k|>|m-n|/2\), we have
\[
1
\leq
\frac{2^{r+1}|k|^{r+1}}
{|m-n|^{r+1}}.
\]
Therefore,
\[
\sum_{|k|>|m-n|/2}\|K(k)\|
\leq
\frac{2^{r+1}}
{|m-n|^{r+1}}
\sum_{k\neq 0}|k|^{r+1}\|K(k)\|.
\]
Substituting this estimate into \eqref{eqmaineq0202}, we obtain
\[
\begin{aligned}
&\frac{4}{|m-n|}
\sum_{|k|>|m-n|/2}
\|K(k)\|\,\|P_{n-k}\Phi_{m,s}\|
\\
&\, \, \leq
\frac{2^{r+3}}
{|m-n|^{r+2}}
\sum_{k\neq 0}|k|^{r+1}\|K(k)\|.
\end{aligned}
\]
Moreover,
\[
\|K(k)\|
\leq
|a(k)|+|a(-k)|,
\]
and therefore
\[
\sum_{k\neq 0}|k|^{r+1}\|K(k)\|
\leq
2\|a\|_{r+1}.
\]
Also,
\[
\sum_{k\neq 0}\|K(k)\|
\leq
2\|a\|_0
\leq
2\|a\|_{r+1}.
\]

Combining the inequalities obtained above, we conclude that there exists
a constant \(\gamma_{r+1}>0\) such that
\[
\|P_n\Phi_{m,s}\|
\leq
\frac{\gamma_{r+1}}
{|m-n|^{r+2}},
\, \,  |m-n|>R.
\]

For \(|m-n|\leq R\), the estimate $\|P_n\Phi_{m,s}\|\leq 1$ holds. Thus, after increasing the constant if necessary, there exists
\(\gamma_{r+1}>0\) such that
\[
\|P_n\Phi_{m,s}\|
\leq
\frac{\gamma_{r+1}}
{\langle m-n\rangle^{r+2}},
\, \, 
m,n\in\mathbb{Z},\, \,  s=1,2.
\]
This completes the induction. Therefore, for every
\(r\in\mathbb{N}\cup\{0\}\), if \(a\in\ell^1_r(\mathbb{Z})\), there exists a $\gamma_r>0$ such that
\[
\|P_n\Phi_{m,s}\|
\leq
\frac{\gamma_r}
{\langle n-m\rangle^{r+1}},
\, \, 
m,n\in\mathbb{Z},\, \,  s=1,2.
\]
This proves item \textup{(ii)}.
\hfill \qedsymbol

%%%%%%%%%%%%%%%%%%%%%%%%%%%%%%%%%%%%%%%%%%%%%%%%%%%%%%%%%%%%%%%%%%%%%%%%%%%%%%%%%%%%%%%%%%%%%%%%%%%%%%%%%%%%%%%%%%%%%%%%%%%%%%%%%%%%%%%%%%%%%%%%%%%%%%%%%%%%%%%%%%%%%%%%%%%%%%%%%%%%%%%%%%%%%%%%%%%%%%%%%%%%%%%%%%%%%%%%%%%%%%%%%%%%%%%%%%%%%%%%%%%%%%%%%%%%%%%%%%%%%%%%%%%%%%%%%%%%%%%%%%%%%%%%%%%%%%%%%%

\newpage

\begin{center} \Large{Acknowledgments} 
\end{center}
\addcontentsline{toc}{section}{Acknowledgments}

\noindent M. Aloisio was supported by grant \#2025/25338-1 from the São Paulo Research Foundation (FAPESP) and in part by grant \#01/24/APQ-03132-24 from the Minas Gerais Research Foundation (FAPEMIG). C. R. de Oliveira thanks CNPq (a Brazilian government agency) for partial support under grant 303689/2021-8.

%%%%%%%%%%%%%%%%%%%%%%%%%%%%%%%%%%%%%%%%%%%%%%%%%%%%%%%%%%%%%%%%%%%%%%%%%%%%%%%%%%%%%%%%%%%%%%%%%%%%%%%%%%%%%%%%%%%%%%%%%%%%%%%%%%%%%%%%%%%%%%%%%thebibliography- %%%%%%%%%%%%%%%%%%%%%%%%%%%%%%%%%%%%%%%%%%%%%%%%%%%%%%%%%%%%%%%%%%%%%%%%%%%%%%%%%%%%%%%%%%%%%%%%%%%%%%%%%%%%%%%%%%%%%%%%%%%%%%%%%%%%%%%%%%%%%%%%%%%%%%%%%%%%%%%%%%%%%%%%%%%%%%%%%

\ 

\noindent  \noindent Moacir Aloisio. Email: moacir.aloisio@ufvjm.edu.br, DME, UFVJM, Diamantina, MG, 39100-000, Brazil and ICMC, USP, S\~ao Carlos, SP, 13566-590, Brazil

\noindent  C\'esar R. de Oliveira. Email: oliveira@ufscar.br,  DM,   UFSCar, S\~ao Carlos, SP, 13560-970 Brazil

\noindent  Mariane Pigossi. Email: mariane.pigossi@ufes.br, DMAT, UFES, Vit\'oria, ES, 29075-910 Brazil

\end{document}